\begin{document}
\def\rn{{\mathbb R^n}}  \def\sn{{\mathbb S^{n-1}}}
\def\co{{\mathcal C_\Omega}}
\def\z{{\mathbb Z}}
\def\nm{{\mathbb (\rn)^m}}
\def\mm{{\mathbb (\rn)^{m+1}}}
\def\n{{\mathbb N}}
\def\cc{{\mathbb C}}

\newtheorem{defn}{Definition}
\newtheorem{thm}{Theorem}
\newtheorem{lem}{Lemma}
\newtheorem{cor}{Corollary}
\newtheorem{rem}{Remark}

\title{\bf\Large Mixed radial-angular bounds for a class of integral operators
on Heisenberg groups
\footnotetext{{\it Key words and phrases}:sharp bound; mixed radial-angular space; Heisenberg group.
\newline\indent\hspace{1mm} {\it 2020 Mathematics Subject Classification}: Primary 42B25; Secondary 42B20, 47H60, 47B47.}}

\date{}
\author{Xiang Li, Huan Liang, Shaozhuang Xu\footnote{Corresponding author}, Dunyan Yan}
\maketitle
\begin{center}
\begin{minipage}{13cm}
{\small {\bf Abstract:}\quad
In this paper, we will prove the sharp bounds of various operators in mixed radial angular spaces on Heisenberg groups. It mainly includes the boundedness of linear transformation eigenvalue operator in mixed radial angular space; Sharp Bounds of Hilbert Operator and Hardy-Littlewood-P$\acute{o}$lya Operator in mixed radial-angular space.}
\end{minipage}
\end{center}

\section{Introduction}
\par
\begin{thm}\label{thm1}
Suppose $K$ is a non-negative kernel defined on $\mathbb{R}^n \times \mathbb{R}^n$, continuous on any domain that excludes the point $(0,0)$, homogeneous of degree $-n, K(\delta u, \delta v)=\delta^{-n} K(u, v)$and $K(R u, R v)=$ $K(u, v)$ for any $R \in S O(n)$. Then $K$ define an integral operator
$$
T f(x)=\int_{\mathbb{R}^n} K(x, y) f(y) d y
$$
which maps $L^p\left(\mathbb{R}^n\right)$ to $L^p\left(\mathbb{R}^n\right)$ for $1<p<\infty$
\begin{equation}\label{main_1}
\|T f\|_{L^p\left(\mathbb{R}^n\right)} \leq C\|f\|_{L^p\left(\mathbb{R}^n\right)}
\end{equation}
where the optimal constant is given by
\begin{equation}\label{main_2}
C=\int_{\mathbb{R}^n} K\left(x, \hat{e}_1\right)|x|^{-n / p^{\prime}} d x
\end{equation}
and $\hat{e}_1$ is a unit vector in the first coordinate direction.
 \end{thm}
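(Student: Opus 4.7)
My strategy is the classical one: use the formal (non-$L^p$) eigenfunction $f_0(x) = |x|^{-n/p}$ of $T$ both as the Schur-test weight for the upper bound \eqref{main_1} and as the model for the extremizing sequence in the sharpness argument for \eqref{main_2}. The first task is therefore to verify that $Tf_0 = C f_0$ formally. Substituting $y = |x|z$ in the integral defining $Tf_0(x)$ and invoking the homogeneity $K(x,|x|z) = |x|^{-n}K(x/|x|, z)$ yields
\[
Tf_0(x) \;=\; |x|^{-n/p}\int_{\mathbb{R}^n} K(x/|x|,\, z)\, |z|^{-n/p}\,dz,
\]
and the rotation invariance of $K$ lets me replace $x/|x|$ by $\hat{e}_1$ inside this integral. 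One then needs the identity
\[
\int_{\mathbb{R}^n} K(\hat{e}_1, z)\,|z|^{-n/p}\,dz \;=\; \int_{\mathbb{R}^n} K(x,\hat{e}_1)\,|x|^{-n/p'}\,dx,
\]
which can be verified either by the inversion $z \mapsto z/|z|^2$ combined with homogeneity and rotation invariance, or equivalently by recognizing the right-hand side as the analogous expression for the adjoint kernel $K^\ast(x,y) = K(y,x)$ at exponent $p'$.

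\textbf{Upper bound via a Schur test.} With $w(y) = |y|^{-n/p}$ as Schur weight, I split the integrand as $K(x,y)f(y) = K(x,y)^{1/p'}w(y)^{1/p'} \cdot K(x,y)^{1/p} f(y) w(y)^{-1/p'}$ and apply H\"older's inequality to obtain
\[
|Tf(x)|^p \;\leq\; \Bigl(\int K(x,y)\,w(y)\,dy\Bigr)^{p/p'} \int K(x,y)\,|f(y)|^p\, w(y)^{-p/p'}\,dy.
\]
The first factor equals $(Cw(x))^{p/p'}$ by the eigenvalue computation above. Integrating in $x$, applying Fubini, and using the dual identity $\int K(x,y)\,|x|^{-n/p'}\,dx = C\,|y|^{-n/p'}$ (proved by the substitution $x = |y|z$ and rotation invariance, in complete parallel to the forward computation) gives $\|Tf\|_{L^p}^p \leq C^{p/p'+1}\|f\|_{L^p}^p = C^p\|f\|_{L^p}^p$, which is \eqref{main_1}.

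\textbf{Sharpness, and the main obstacle.} For optimality of $C$ I would take a regularization of $f_0$, e.g.\ $f_\epsilon(x) = |x|^{-n/p-\epsilon}\mathbf{1}_{\{|x|\geq 1\}}$ with $\epsilon \downarrow 0$; another application of the $y=|x|z$ substitution expresses $Tf_\epsilon(x)$ as $|x|^{-n/p-\epsilon}$ times an inner integral that tends to $C$ as $|x|\to\infty$ and $\epsilon\to 0^+$, and a careful comparison of $\|Tf_\epsilon\|_{L^p}$ with $\|f_\epsilon\|_{L^p}$ recovers $C$ in the limit. The step I expect to require the most care is the identity between the two a priori different expressions for $C$ mentioned above: the change-of-variables chain linking $\int K(\hat{e}_1, z)|z|^{-n/p}\,dz$ to the asserted $\int K(x,\hat{e}_1)|x|^{-n/p'}\,dx$ requires simultaneously tracking the homogeneity exponent, the Jacobian of the inversion $z \mapsto z/|z|^2$, and the rotation invariance of $K$. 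Once this identification is in hand, both the Schur estimate and the approximation argument for sharpness follow the standard template.
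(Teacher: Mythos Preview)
Your argument is sound: the Schur test with weight $|y|^{-n/p}$ gives the upper bound, the identity between $\int K(\hat e_1,z)|z|^{-n/p}\,dz$ and $\int K(x,\hat e_1)|x|^{-n/p'}\,dx$ follows exactly as you indicate (pass to polar coordinates, invert the radial variable, and use that rotation invariance forces $K(u,v)$ to depend only on $|u|$, $|v|$ and $u\cdot v$), and the truncated powers $f_\epsilon$ saturate the constant.

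However, there is nothing to compare against: the paper does \emph{not} prove Theorem~1. It is quoted as a known result, with the inequality attributed to Stein--Weiss and the sharp constant to Beckner, and it serves only as motivation for the paper's own Theorem~2 on the Heisenberg group. If one looks at the proof of that analogous Theorem~2 to infer the authors' preferred method, it differs from yours in two respects. First, instead of a Schur/H\"older splitting they reduce to radial $f$ by replacing $f$ with its spherical average $g$ (using $Tg=Tf$ and $\|g\|\le\|f\|$), and then apply Minkowski's integral inequality after the dilation substitution $y\mapsto\delta_r y$; this bypasses the dual Schur identity entirely. Second, for sharpness they simply plug in the formal eigenfunction $|x|_h^{-Q/p}$ without regularizing, which is heuristic since that function is not in the space. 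Your approach is the more standard and more rigorous of the two; the radial-averaging route is slightly shorter but, as written in the paper, leaves the optimality step incomplete in exactly the place where you introduce $f_\epsilon$.
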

The inequality (\ref{main_1}) was first given by Stein and Weiss \cite{EM}. In \cite{BW1,BW2}, Beckner et al. proved that the constant (\ref{main_2}) is the sharp constant. Inspired by Theorem \ref{thm1}, we investigate a class of integral operators on Heisenberg group and obtain sharp bounds in Mixed radial-angular space.
Firstly,let us recall some basic knowledge about Heisenberg group.The Heisenberg group $\mathbb{H}^n$ is non-commutative nilpotent Lie group, with the underlying manifold $\mathbb{R}^{2n+1}$ and the group law.

Let
$$
x=(x_1,\ldots,x_{2n},x_{2n+1}),y=(y_1,\ldots,y_{2n},y_{2n+1}),
$$
then we have
$$
x \circ y=\left(x_1+y_1, \ldots, x_{2 n}+y_{2 n}, x_{2 n+1}+y_{2 n+1}+2 \sum_{j=1}^n(y_j x_{n+j}-x_j y_{n+j})\right).
$$
By definition, we can see that the identity element on $\mathbb{H}^n$ is $0 \in \mathbb{R}^{2 n+1}$, while the element $x^{-1}$ inverse to $x$ is $-x$. The corresponding Lie algebra is generated by the left-invariant vector fields
$$
\begin{gathered}
	X_j=\frac{\partial}{\partial x_j}+2 x_{n+j} \frac{\partial}{\partial x_{2 n+1}}, \quad j=1,2, \ldots, n, \\
	X_{n+j}=\frac{\partial}{\partial x_{n+j}}-2 x_j \frac{\partial}{\partial x_{2 n+1}}, \quad j=1,2, \ldots, n, \\
	X_{2 n+1}=\frac{\partial}{\partial x_{2 n+1}} .
\end{gathered}
$$
The only non-trivial commutator relation is
$$
\left[X_j, X_{n+j}\right]=-4 X_{2 n+1}, \quad j=1,2 \ldots, n.
$$
Note that  Heisenberg group $\mathbb{H}^n$ is a homogeneous group with dilations
$$
\delta_r(x_1, x_2, \ldots, x_{2 n}, x_{2 n+1})=(r x_1, r x_2, \ldots, r x_{2 n}, r^2 x_{2 n+1}), \quad r>0.
$$

The Haar measure on $\mathbb{H}^n$ coincides with the usual Lebesgue measure on $\mathbb{R}^{2n+1}$. Denoting any measurable set $E \subset \mathbb{H}^n$ by $|E|$, then we obtain
$$
|\delta_r(E)|=r^Q|E|, d(\delta_r x)=r_Q d x,
$$
where $Q=2n+2$ is called the homogeneous dimension of $\mathbb{H}^n$.

The Heisenberg distance derived from the norm
$$
|x|_h=\left[\left(\sum_{i=1}^{2 n} x_i^2\right)^2+x_{2 n+1}^2\right]^{1 / 4},
$$
where $x=(x_1,x_2,\ldots,x_{2n},x_{2n+1})$ is given by
$$
d(p, q)=d(q^{-1} p, 0)=|q^{-1} p|_h.
$$
This distance $d$ is left-invariant in the sense that $d(p,q)$ remains unchanged when $p$ and $q$ are both left-translated by some fixed vector on $\mathbb{H}^n$. Besides, $d$ satisfies the triangular inequality defined by \cite{Kor}
$$
d(p, q) \leq d(p, x)+d(x, q), \quad p, x, q \in \mathbb{H}^n.
$$
For $r>0$ and $x\in\mathbb{H}^n$, the ball and sphere with center $x$ and radius $r$ on $\mathbb{H}^n$ are given by
$$
B(x, r)=\{y \in \mathbb{H}^n: d(x, y)<r\}
$$
and
$$
S(x, r)=\{y \in \mathbb{H}^n: d(x, y)=r\}.
$$
Then we have
$$
|B(x, r)|=|B(0, r)|=\Omega_Q r^Q,
$$
where
$$
\Omega_Q=\frac{2 \pi^{n+\frac{1}{2}} \Gamma(n / 2)}{(n+1) \Gamma(n) \Gamma((n+1) / 2)}
$$
denote the volume of the unit ball $B(0,1)$ on $\mathbb{H}^n$  that is $\omega_Q=Q \Omega_Q$ (see \cite{}). For more details about Heisenberg group can be refer to \cite{GB} and \cite{ST}.

In recent years, mixed radial-angular spaces have been successfully used to study Streehartz estimation and partial differential equations to improve the corresponding results (\cite{CMZ},\cite{AC},\cite{AD},\cite{AE},\cite{AF},\cite{AG}). After that, many operators in harmonic analysis are proved to be bounded on these spaces. For example, \cite{AH} established the outer theorem of mixed radial angular space and studied the boundedness of a class of weighted bounded operators is discussed. In addition, Liu et al. \cite{AI},\cite{AJ},\cite{AK} also considered the boundedness of some operators with rough kernels in mixed radial-angular spaces.
In \cite{FGLY},\cite{FGPW},\cite{FHLL},\cite{FLPS},\cite{FLS},\cite{FPW},\cite{SFL} and \cite{WF1}, Fu and Wu et al. have engaged in many related research, which provid convenience for our research. Hang et al. have also conducted many related studies(see \cite{HXY}).

Now, we give the definition of mixed radial-angular spaces on Heisenberg group.
\begin{defn}
	For any $n\geq2$,$1\leq p$,$\bar{p}\leq\infty$, the mixed radial-angular space $L^p_{|x|_h} L^{\bar{p}}_\theta(\mathbb{H}^n)$ consists of all functions $f$ in $\mathbb{H}^n$ for which
$$	\|f\|_{L^p_{|x|_h}L^{\bar{p}}_\theta(\mathbb{H}^n)}:=\left(\int_0^\infty \left(\int_{\mathbb{S}^{Q-1}}|f(r,\theta)|^pd \theta\right)^{\frac{p}{\bar{p}}}r^{Q-1}dr\right)^{\frac{1}{p}}<\infty,
$$
where $\mathbb S^{Q-1}$ denotes the unit sphere in $\mathbb{H}^n$.
\end{defn}

Next, we begin to study the operator defined in Theorem \ref{thm1} in mixed radial-angular space on Heisenberg group.
\section{Sharp constant for integral operator}
 First, we give the definition of integral operator with the nonnegative kernel $K$ on Heisenberg group.
 \begin{defn}
Suppose that $K$ is a nonnegative kernel defined on $\mathbb{H}^{n}$ and satisfies the homogeneous degree $-n$,
$$K(|\delta|_h x,|\delta|_h y)=|\delta|^{-n}_h K(x,y),
$$
and$$
K(Rx,Ry)=K(x,y)
$$
for any $R\in SO(n)$, then $K$ denotes the kernel of
\begin{equation}\label{main_3}
Hf(x)=\int_{\mathbb{H}^{n}}K(x,y)f(y)d y.
\end{equation}
\end{defn}
\begin{thm}\label{thm2}
For operator $T$, which maps $L_{|x|_h}^pL_\theta^{\bar{p}_2}(\mathbb H^n)$ to $L_{|x|_h}^pL_\theta^{\bar{p}_2}(\mathbb H^n)$.That is
\begin{align*}
\|T(f)(x)\|_{L_{|x|_h}^pL_\theta^{\bar{p}_2}(\mathbb H^n)}\leq E_m.
\end{align*}
The optimal constant is
\begin{align*}
E_m=\omega_Q^{\frac{1}{\bar{p}_2}-\frac{1}{\bar{p}_1}}\int_{\mathbb H^n}K(e,y)|y|_h^{-Q/p}dy\|f_j\|_{L_{|x|_h}^pL_\theta^{\bar{p}_2}(\mathbb H^n)}.
\end{align*}
Moreover, if operator $T$ is bounded ,we can obtain the norm of the operator $T$.\\
That is
\begin{align*}
\|T(f)(x)\|_{L_{|x|_h}^pL_\theta^{\bar{p}_2}(\mathbb H^n)}=\omega_Q^{\frac{1}{\bar{p}_2}-\frac{1}{\bar{p}_1}}\int_{\mathbb H^n}K(e,y)|y|_h^{-Q/p}dy\|f_j\|_{L_{|x|_h}^pL_\theta^{\bar{p}_2}(\mathbb H^n)}.
\end{align*}
\end{thm}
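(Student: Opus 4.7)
The strategy is to adapt the classical Stein--Weiss/Beckner proof of the Euclidean sharp bound to the Heisenberg setting, then absorb the angular component via Minkowski and H\"older on the sphere $\mathbb{S}^{Q-1}$. I would separate the argument into an upper bound and a matching lower bound produced by an asymptotic extremizing sequence.

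For the upper bound, fix $x\in\mathbb{H}^n$, write $x=\delta_{|x|_h}(x')$ with $x'\in\mathbb{S}^{Q-1}$, and in the integral defining $Tf(x)$ perform the dilation substitution $y=\delta_{|x|_h}(z)$, whose Jacobian is $|x|_h^Q$. The homogeneity $K(\delta_r u,\delta_r v)=r^{-Q}K(u,v)$ collapses the prefactors and yields the clean identity
\begin{equation*}
Tf(x)=\int_{\mathbb{H}^n}K(x',z)\,f\!\left(\delta_{|x|_h}z\right)dz.
\end{equation*}
I would then take the $L^{\bar p_2}_\theta$-norm in the angular variable $x'$ and apply Minkowski's integral inequality to pull the norm past the $z$-integral. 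By the rotation invariance $K(Rx',Rz)=K(x',z)$ and the invariance of the spherical measure under $SO(n)$, the quantity $\|K(\cdot,z)\|_{L^{\bar p_2}_\theta}$ depends only on $|z|_h$ and, after rotating $z$ to $|z|_h\,e$, equals $\|K(\cdot,|z|_h e)\|_{L^{\bar p_2}_\theta}$. Passing to polar coordinates in $z$, the remaining angular integral in $f$ is controlled in $L^{\bar p_1}_\theta$ via H\"older on $\mathbb{S}^{Q-1}$, which is precisely where the factor $\omega_Q^{1/\bar p_2-1/\bar p_1}$ arises. After a further change of scale $|z|_h\mapsto |z|_h/|x|_h$ inside the resulting one-dimensional convolution and a final application of Minkowski in the radial variable, the $L^p_{|x|_h}$-integration produces the advertised constant
\begin{equation*}
\omega_Q^{1/\bar p_2-1/\bar p_1}\int_{\mathbb{H}^n}K(e,y)\,|y|_h^{-Q/p}\,dy.
\end{equation*}

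For the sharpness, I would test with a sequence of radial near-extremizers modeled on the critical scaling profile, for instance
\begin{equation*}
f_N(y)=|y|_h^{-Q/p}\,\chi_{\{1\le |y|_h\le N\}}.
\end{equation*}
Because $f_N$ is radial, the $L^{\bar p_1}_\theta$-norm collapses to a constant multiple of $|y|_h^{-Q/p}$, so $\|f_N\|_{L^p_{|x|_h}L^{\bar p_1}_\theta}$ reduces to $(\omega_Q)^{1/\bar p_1}(\log N)^{1/p}$. Using homogeneity of $K$ inside $Tf_N$ and Fubini in the resulting double integral, one obtains $Tf_N(x)\approx |x|_h^{-Q/p}\int_{\mathbb{H}^n}K(e,z)|z|_h^{-Q/p}dz$ away from the endpoints; the radial structure makes the $L^{\bar p_2}_\theta$-norm pick up exactly $(\omega_Q)^{1/\bar p_2}$, and the ratio $\|Tf_N\|/\|f_N\|$ tends to the claimed constant as $N\to\infty$.

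The main obstacle will be the bookkeeping of the two angular exponents $\bar p_1$ and $\bar p_2$: Minkowski in $\theta$ only cooperates with the target exponent $\bar p_2$, whereas the source norm is $L^{\bar p_1}_\theta$, so the H\"older comparison on $\mathbb{S}^{Q-1}$ must be inserted at just the right moment and must become an \emph{equality} when tested against purely radial functions — otherwise a strictly larger constant would appear on the bound and sharpness would fail. Verifying that this H\"older step saturates on the radial extremizing sequence, together with confirming that the Minkowski step is also asymptotically tight in the same limit, is the technical heart of the argument; everything else reduces to careful change-of-variables on $\mathbb{H}^n$.
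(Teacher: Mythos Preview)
Your plan is reasonable in outline but contains a real gap in the upper bound, and it also diverges from the route the paper takes.

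\textbf{The gap.} After the dilation substitution you correctly reach $Tf(x)=\int_{\mathbb H^n}K(x',z)\,f(\delta_{|x|_h}z)\,dz$, and you then apply Minkowski in the angular variable $x'$. That step produces
\[
\|Tf(\delta_r\,\cdot\,)\|_{L^{\bar p_2}_\theta}\ \le\ \int_{\mathbb H^n}\|K(\cdot,z)\|_{L^{\bar p_2}_\theta}\,|f(\delta_r z)|\,dz,
\]
so after your polar/H\"older/radial-Minkowski steps the constant you actually obtain is
\[
\omega_Q^{1-1/\bar p_1}\int_0^\infty \|K(\cdot,\delta_s e)\|_{L^{\bar p_2}_\theta}\,s^{\,Q-1-Q/p}\,ds.
\]
The constant claimed in the theorem, once written in polar coordinates and using $K(Rx,Ry)=K(x,y)$, carries the $L^1_\theta$ average $\int_{\mathbb S^{Q-1}}K(x',\delta_s e)\,dx'$ in place of the $L^{\bar p_2}_\theta$ norm. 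By H\"older on $\mathbb S^{Q-1}$ your constant is always at least as large, with equality only when $K(x',z)$ is constant in $x'$ for each $|z|_h$. For a kernel genuinely depending on the relative angle (which the hypotheses permit), your upper bound is strictly bigger than $E_m$, so your extremizing sequence cannot close the gap. In particular, the assertion that the H\"older step ``precisely'' produces $\omega_Q^{1/\bar p_2-1/\bar p_1}$ is where things go wrong: it would only do so if $\|K(\cdot,\delta_s e)\|_{L^{\bar p_2}_\theta}=\omega_Q^{1/\bar p_2-1}\|K(\cdot,\delta_s e)\|_{L^{1}_\theta}$, i.e.\ if $K$ is angularly constant.

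\textbf{How the paper avoids this.} The paper does not take the angular-Minkowski route at all. It first replaces $f$ by its spherical average
\[
g(x)=\frac{1}{\omega_Q}\int_{\mathbb S^{Q-1}}f(\delta_{|x|_h}\theta)\,d\theta,
\]
shows $\|g\|_{L^p_{|x|_h}L^{\bar p_1}_\theta}\le\|f\|_{L^p_{|x|_h}L^{\bar p_1}_\theta}$ via H\"older on the sphere, and argues that $Tg=Tf$, so it suffices to work with radial data. For radial $f$ the output $Tf$ is itself radial, hence $\|Tf\|_{L^{\bar p_2}_\theta}=\omega_Q^{1/\bar p_2}|Tf(r)|$ exactly, with no Minkowski loss; a single radial Minkowski then gives the stated constant. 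The identity $Tg=Tf$ is doing the work your angular Minkowski cannot, and it is worth noting that this identity is really using that $K(x,y)$ depends only on $|x|_h$ and $|y|_h$ (as the Hilbert and Hardy--Littlewood--P\'olya kernels in the later sections do); the paper's computation relies on this implicitly.

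\textbf{On sharpness.} Your truncated extremizers $f_N=|y|_h^{-Q/p}\chi_{\{1\le|y|_h\le N\}}$ are more rigorous than what the paper does: the paper simply inserts the formal eigenfunction $f=|x|_h^{-Q/p}$, which is not in the space, and reads off equality. Your lower-bound argument is fine, but it can only match the upper bound once the upper bound itself is the sharp one, so you must repair the upper-bound step first --- either by inserting the radialization of $f$ before estimating, or by adding the hypothesis that $K$ depends only on $(|x|_h,|y|_h)$.
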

\begin{proof}
\emph{Set $$g'(x)=\frac{1}{\omega_Q}\int_{\mathbb S^{Q-1}}f(\delta_{|x|_h}\theta)d\theta,x\in \mathbb H^n$$,where g is a radial function.}

\begin{align*}
||g||_{L_{\theta}^{\bar{p_{1}}}(\mathbb H^n)}&=\left(\int_0^\infty\left(\int_{\mathbb S^{Q-1}}|g(r,\theta)|^{\bar{p_1}}d\theta\right)^{\frac{p}{\bar{p_1}}}\right)r^{Q-1}dr)^{\frac{1}{p}}\\
&=\left(\int_0^\infty\left(\frac{1}{\omega_Q}|g(r)|^{\bar{p}_1}\right)^{\frac{p}{\bar{p}_1}}r^{Q-1}dr\right)^{\frac{1}{p}},\\
&=\omega_Q^{\frac{1}{\bar{p}_1}}\left({\int_0^{\infty}|g(r)|^{p}r^{Q-1}dr}\right)^{\frac{1}{p}}
\end{align*}
$g(r)$ can be defined as $g(r)=g(x),x\in \mathbb H^n,|x|_h=r$

Using the H\"{o}lder inequality, we can get
\begin{align*}
\|g\|_{L_{|x|_h}^p(\mathbb H^n)}&=\omega_Q^{\frac{1}{\bar{p}_1}}\left(\int_0^\infty \left|\frac{1}{\omega_Q}\int_{\mathbb S^{Q-1}}f(\delta_r\theta)d\theta\right|^{p}r^{Q-1}dr\right)^{\frac{1}{p}}\\
&=\omega_Q^{\frac{1}{\bar{p}_1-1}}\left(\int_0^\infty\left|\int_{\mathbb S^{Q-1}}f(\delta_r\theta)d\theta\right|^pr^{Q-1}dr\right)^{\frac{1}{p}}\\
&\leq\omega_Q^{\frac{1}{p-1}}\left(\int_0^\infty\left(\int_{\mathbb S^{Q-1}}|f(\delta_r\theta)|^{\bar{p}_1}d\theta\right)^{\frac{p}{\bar{p}_1}}\left(\int_{S^{Q-1}}d\theta\right)^{\bar{p}_1}d\theta\right)^{\frac{p}{\bar{p}_1}}r^{Q-1})^{\frac{1}{p}}\\
&=\left(\int_0^\infty\left(\int_{\mathbb S^{Q-1}}|f(\delta_r\theta)^{\bar{p}_1}d\theta\right)^{\frac{p}{\bar{p}_1}}r^{Q-1}\right)^{\frac{1}{p}}\\
&=\|f\|_{L_{|x|_h}^pL_\theta^{\bar{p}_1}(\mathbb H^n)}\\
Tg(x)&=\int_{\mathbb H^{n}}K(x,y)\left(\frac{1}{\omega_Q}\int_{\mathbb S^{Q-1}}f(\delta_{|x|_h}\theta)d\theta\right)dy\\
&=\int_0^\infty\int_{\mathbb S^{Q-1}}K(rx,ry)\left(\frac{1}{\omega_Q}\int_{\mathbb S^Q-1}f(\delta_{|x|_h}\theta)\right)r^{Q-1}dydr\\
&=\int_0^\infty\int_{\mathbb S^{Q-1}}K(rx,ry)f(\delta_r\theta)r^{Q-1}d\theta dr\\
&=Tf(x),
\end{align*}
So we have
\begin{align*}
\frac{\|T(f)\|_{L_{|x|_h}^pL_\theta^{\bar{p}_2}(\mathbb H^n)}}{\|f\|_{L_{|x|_h}^pL_\theta^{\bar{p}_2}(\mathbb H^n)}}\leq\frac{\|T(g)\|_{L_{|x|_h}^pL_\theta^{\bar{p}_2}(\mathbb H^n)}}{\|g\|_{L_{|x|_h}^pL_\theta^{\bar{p}_2}(\mathbb H^n)}},
\end{align*}
Where means that the operator t can be expressed as a radial function, so we can get
\begin{align*}
\|\mathcal H_hf\|_{L_|x|_h^pL_\theta^{\bar{p}_2}(\mathbb H^n)}&=\left(\int_0^\infty\left(\int_{S^{Q-1}}|\mathcal H_h(f)(r,\theta)|^{\bar{p}_2}d\theta\right)^{\frac{p}{\bar{p}_2}}r^{Q-1}dr\right)^{\frac{1}{p}}\\
&=\left(\int_0^\infty\left(\int_{S^{Q-1}}|\mathcal H_h(f)(r)|^{\bar{p}_2}d\theta\right)^{\frac{p}{\bar{p}_2}}r^{Q-1}dr\right)^{\frac{1}{p}}\\
&=\omega_Q^{\frac{1}{\bar{p}_2}}\left(\int_0^\infty|\mathcal H_h(f)(r)|^pdr\right)^{\frac{1}{p}},
\end{align*}
where $T_h(f)(r)=T_H(f)(x),|x|_h=r$.\\
Next, using Minkowski's inequality, we can continue to get
\begin{align*}
\|T(f)\|_{L_{|x|_h}^pL_\theta^{\bar{p}_2}(\mathbb H^n)}&=\omega_Q^{1/{\bar{p}_2}}\left(\int_0^\infty\left|\int_{\mathbb H^n}K(x,y)f(y)dy\right|^pr^{Q-1}dr\right)^{\frac{1}{p}}\\
&=\omega_Q^{1/{\bar{p}_2}}\left(\int_0^\infty\left|\int_{\mathbb H^n}K(e,y)f(\delta_ry)dy\right|^pr^{Q-1}dr\right)^{\frac{1}{p}}\\
&\leq\omega_Q^{1/{\bar{p}_2}}\int_{\mathbb H^n}K(e,y)\left(\int_0^\infty|f(\delta_{|y|}r)|pr^{Q-1}dr\right)^{\frac{1}{p}}dy\\
&=\omega_Q^{1/{\bar{p}_2}}\int_{\mathbb H^n}K(e,y)\left(\int_0^\infty|f(r)|^pr^{Q-1}dr\right)^{\frac{1}{p}}|y|_h^{-Q/p}dy\\
&=\omega_Q^{\frac{1}{\bar{p}_2}-\frac{1}{\bar{p}_1}}\int_{\mathbb H^n}K(e,y)|y|_h^{-Q/p}dy\|f\|_{L_{|x|_h}^pL_\theta^{\bar{p}_2}(\mathbb H^n)}.
\end{align*}
Set$f_j=|x|_h^{-Q/p}$,then we have
\begin{align*}
T(f)(x)&=\int_{\mathbb H^n}K(e,y)|y|_h^{-Q/p}dy|x|_h^{-Q/p}\\
\|T(f)(x)\|_{L_{|x|_h}^pL_\theta^{\bar{p}_2}(\mathbb H^n)}&=\omega_Q^{\frac{1}{\bar{p}_2}-\frac{1}{\bar{p}_1}}\int_{\mathbb H^n}K(e,y)|y|_h^{-Q/p}dy\|f_j\|_{L_{|x|_h}^pL_\theta^{\bar{p}_2}(\mathbb H^n)}
\end{align*}
The Theorem 2 has been completed.
\end{proof}

\section{Optimal Bounds of Hilbert Operators in Mixed Radial Angular Spaces}
\begin{defn}\label{defn1}
Hilbert operator is a basic generalization of classical Hilbert inequality, $m$-linear $n$-dimensional Hilbert operator is defined as
$$
T_{m}(f_1,...,f_m)(x)=\int_{\mathbb R^{nm}}\frac{f_{1}(y_1)\cdots f_{m}(y_m)}{(|x|^{n}+|y_1|^{n}+\cdots+|y_m|^{n})^{m}}dy_{1}\cdots dy_{m},x\in\mathbb R^{n}\backslash\{0\}.
$$
Firstly, we give the definition of $1$-linear $1$-dimensional Hilbert operator.
$$\int_0^\infty \frac {f(t)}{(x+t)}dt.$$
Let $t=xy$,that is $dy=d\frac{t}{x}=\frac{1}{x}dt$, then the above formula can get another form,
  $$
  \int_0^\infty \frac{f(|x|y)}{(1+y)}dy.
  $$
From this, we can generalize the form of $1$-linear $n$-dimensional Hilbert operator,
  $$
  T(f)(x)=\int_{\mathbb R^{n}}\frac{f(|x|y)}{(1+|y|^{n})}dy
  $$
  In the same way, $m$-linear $n$-dimensional Hilbert operator is generalized.

\begin{align*}
T_{m}(f_1,...,f_m)(x)&=\int_{\mathbb R^{nm}}\frac{f_1(y_1)...f_m(y_m)}{(1+|y_1|^n+...+
|y_m|^n)^m}dy_1...dy_m\\
&=\int_{\mathbb R^n}...\int_{\mathbb R^n}\frac{f_1(|x|y_1)...f_m(y_m)}{(1+|y_1|^n+...+|y_m|^n)^m}dy_1...dy_m.
\end{align*}
It's expansion form in Heisenberg Group is as follows
\begin{align*}
\int_{\mathbb H^{nm}}\frac{f_1(y_1)...f_m(y_m)}{|x|_h^Q+|y_1|_h^Q+...+|y_m|_h^Q}.
\end{align*}
\end{defn}
\begin{proof}
The sharp norm of Hilbert operator in mixed radial angular space
style
\begin{align*}
D_m=\omega_Q^{\frac{1}{\bar{p}_2}-\frac{1}{\bar{p}_1}}\omega_Q^m\int_0^\infty...\int_0^\infty\frac{\prod_{i=1}^m|r|^{Q-\frac{Q}{p_i}-1}}{(1+\prod_{i=1}^m|s|)^m}dr_1...dr_m.
\end{align*}
By using the spherical coordinate formula, the calculation can be continued.
\begin{align*}
D_m&=\omega_Q^{\frac{1}{\bar{p}_2}-\frac{1}{\bar{p}_1}}\omega_Q^m\int_0^\infty...\int_0^\infty\frac{\prod_{i=1}^m|r|^{Q-\frac{Q}{p_i}-1}}{(1+\sum_{i=1}^m|s|)^m}ds_1...ds_m\\
&=\frac{\omega_Q^{\frac{1}{\bar{p}_2}-\frac{1}{\bar{p}_1}}\omega_Q^m}{Q^m}\int_0^\infty...\int_0^\infty\frac{\prod_{i=1}^m|s|^{-\frac{1}{p_i}}}{(1+\sum_{i=1}^m|s|)^m}ds_1...ds_m\\
&=\frac{\omega_Q^{\frac{1}{\bar{p}_2}-\frac{1}{\bar{p}_1}}\omega_Q^m}{Q^m}\int_0^\infty...\int_0^\infty\frac{\prod_{i=1}^m|s|^{-\beta_i}}{(1+\sum_{i=1}^m|s|)^m}ds_1...ds_m,
\end{align*}
where,$$|s|=|r|^Q,\beta_i=\frac{1}{p_i}.$$

Using the definitions and properties of Beta function and Gamme function, we can further get,set $$t=\sum_{i=1}^m|s|,$$
then,
\begin{align*}
\int_0^\infty\frac{1}{(1+t)^at^{\beta}}dt=\int_0^1(1-t)^{-\beta}t^{a+\beta-2}dt=B(1-\beta,a+\beta-1).\\
\end{align*}
Let $t_m=(1+t_1+...+t_{m-1})q_m$ ,then we can have
\begin{align*}
&I_m(a,\beta_1,...,\beta_m)\\
&=\int_0^\infty...\int_0^\infty\frac{t_1^{-\beta_1}...t_{m-1}^{-\beta_{m-1}}}{(1+t_1+...+t_{m-1})^{a-1+{\beta_m}}}dt_1...dt_{m-1}\int_0^\infty\frac{1}{(1+q_m)^aq_m^{\beta_m}}dq_m\\
&=B(1-\beta_m,a+\beta_m-1)I_m(a-1+\beta_m,\beta_1,...,\beta_m)\\
&=\frac{\prod_{i=1}^m\Gamma(1-\beta_i)\Gamma(a-m+\sum_{i=1}^m\beta_i)}{\Gamma(a)}
\end{align*}
Giving $\Omega_Q=\frac{2\pi^{n+\frac{1}{2}}\Gamma(\frac{n}{2})}{(n+1)\Gamma(\frac{n+1}{2})}$, we can get
\begin{align*}
&\|p_m^{h_*}\|_{\prod_{i=1}^m\mathbb H_{a_i}^\infty(\mathbb H^n)\rightarrow\mathbb H_{a}^\infty(\mathbb H^n)}\\
&=\frac{\omega_Q^{\frac{1}{\bar{p}_2}-\frac{1}{\bar{p}_1}}\omega_Q^m}{Q^m}\frac{\prod_{i=1}^m\Gamma(1-\frac{a_i}{Q})\Gamma(\frac{a}{Q})}{\Gamma(m)}\\
&=\left(\frac{2\pi^{n+\frac{1}{2}}\Gamma(\frac{n}{2})}{(n+1)\Gamma(\frac{n+1}{2})}\right)^m\left(Q\frac{2\pi^{n+\frac{1}{2}}\Gamma(\frac{n}{2})}{(n+1)\Gamma(\frac{n+1}{2})}\right)^{\frac{1}{\bar{p}_2}-\frac{1}{\bar{p}_1}}\frac{\prod_{i=1}^m\Gamma(1-\frac{a_i}{Q})\Gamma(\frac{a}{Q})}{\Gamma(m)}
\end{align*}
Prove completion.
\end{proof}
\section{Optimal Bound of Hardy-Littlewood-P\'{o}lya Operator in Mixed Radial Angular Space}
\begin{thm}\label{thm3}
For Hardy-Littlewood-P\'{o}lya operator $H$,it is bounded from $L_{|x|_h}^pL_{\theta}^{\bar{p}_1}(\mathbb H^n)$ to $L_{|x|_h}^pL_{\theta}^{\bar{p}_2}(\mathbb H^n)$.
\begin{align*}
\|H(f)\|_{L_{|x|_h}^pL_{\theta}^{\bar{p}_1}(\mathbb H^n)\rightarrow L_{|x|_h}^pL_{\theta}^{\bar{p}_2}(\mathbb H^n)}\leq G.
\end{align*}
Where the sharp constant is
\begin{align*}
G=\omega_Q^{1/\bar{p}_2-1/\bar{p}_1}\int_{\mathbb H^n}\frac{|y|_h^{-Q/p}}{\max(1,|y|_h^Q)}dy.
\end{align*}
In addition, if operator $H$ is bounded, we can get the norm of operator $H$,
\begin{align*}
\|H(f)\|_{L_{|x|_h}^pL_{\theta}^{\bar{p}_1}(\mathbb H^n)\rightarrow L_{|x|_h}^pL_{\theta}^{\bar{p}_2}(\mathbb H^n)}=G.
\end{align*}
\begin{defn}\label{cor1}
If Theorem 3 is established then Hardy-Littlewood-P$\acute{o}$lya operator $H$is bounded, so we have
\begin{align*}
\|H(f)\|_{L_{|x|_h}^pL_{\theta}^{\bar{p}_1}(\mathbb H^n)\rightarrow L_{|x|_h}^pL_{\theta}^{\bar{p}_2}(\mathbb H^n)}=\omega_Q^{1/\bar{p}_2-1/\bar{p}_1}\frac{\omega_QQ}{(Q-Q/p)Q/p}.
\end{align*}
\end{defn}
\end{thm}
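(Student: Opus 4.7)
The plan is to realise the Hardy--Littlewood--P\'olya operator $H$ as a special case of the abstract integral operator treated in Theorem \ref{thm2}, with explicit kernel
\[
K_{H}(x,y)=\frac{1}{\max(|x|_h^{Q},|y|_h^{Q})}.
\]
First I would check that $K_H$ satisfies the structural assumptions preceding Theorem \ref{thm2}: non-negativity is clear; homogeneity of degree $-Q$ follows from $\max(|\delta_r x|_h^Q,|\delta_r y|_h^Q)=r^{Q}\max(|x|_h^Q,|y|_h^Q)$; and rotation invariance holds because the Heisenberg norm $|\cdot|_h$ depends only on the Euclidean length of the first $2n$ coordinates together with $x_{2n+1}$, both preserved by $R\in SO(n)$ acting on the horizontal layer. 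Hence $H$ fits into the framework of Theorem \ref{thm2}.

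Next, I would invoke Theorem \ref{thm2} directly with $K=K_H$ to deduce
\[
\|Hf\|_{L_{|x|_h}^{p}L_{\theta}^{\bar{p}_{2}}(\mathbb{H}^{n})}\le\omega_{Q}^{1/\bar{p}_{2}-1/\bar{p}_{1}}\!\int_{\mathbb{H}^{n}}\!K_{H}(e,y)|y|_{h}^{-Q/p}\,dy\,\|f\|_{L_{|x|_h}^{p}L_{\theta}^{\bar{p}_{1}}(\mathbb{H}^{n})},
\]
and since $|e|_h=1$ we have $K_H(e,y)=1/\max(1,|y|_h^{Q})$, which reproduces the stated constant $G$. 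For the matching lower bound I would follow the template of Theorem \ref{thm2}: take the extremal radial profile $f_{\star}(x)=|x|_h^{-Q/p}$ and, using the dilation change of variables $y=\delta_{|x|_h}z$ together with the homogeneity of $K_H$, compute
\[
Hf_{\star}(x)=|x|_h^{-Q/p}\int_{\mathbb{H}^{n}}\frac{|z|_h^{-Q/p}}{\max(1,|z|_h^{Q})}\,dz.
\]
Thus $H$ acts on $f_{\star}$ as a scalar multiple equal to $G\,\omega_Q^{1/\bar p_1-1/\bar p_2}$, which shows that the mixed-norm ratio $\|Hf_\star\|/\|f_\star\|$ attains $G$. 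Because $f_{\star}$ only formally lives in the space, I would regularise by $f_{\varepsilon}(x)=|x|_h^{-Q/p}\chi_{\{\varepsilon\le|x|_h\le1/\varepsilon\}}$ and recover the same constant in the limit $\varepsilon\to0^{+}$, using that both numerator and denominator diverge at the same logarithmic rate.

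The main computational step, and the one used in Definition \ref{cor1}, is the explicit evaluation of the defining integral. I would split according to $|y|_h\lessgtr1$ and pass to polar coordinates $dy=r^{Q-1}\,dr\,d\sigma(\theta)$ on $\mathbb{H}^{n}$ with $\sigma(\mathbb{S}^{Q-1})=\omega_{Q}$, giving
\[
\int_{\mathbb{H}^{n}}\frac{|y|_h^{-Q/p}}{\max(1,|y|_h^{Q})}\,dy=\omega_Q\!\left(\int_{0}^{1}r^{Q-Q/p-1}dr+\int_{1}^{\infty}r^{-Q/p-1}dr\right)=\frac{\omega_{Q}\,Q}{(Q-Q/p)(Q/p)}.
\]
Multiplying by $\omega_Q^{1/\bar p_2-1/\bar p_1}$ yields the closed form claimed in the corollary. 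The main obstacle is really the sharpness part: one must justify the formal substitution of $f_\star$ on $\mathbb{H}^n$ by a truncation argument and check that the polar-coordinate identity holds verbatim on the Heisenberg group; both rely on the standard fact that Haar measure coincides with Lebesgue measure on $\mathbb{R}^{2n+1}$ and that $|\cdot|_h$ is homogeneous under $\delta_r$, so that the radial/angular factorisation used throughout Theorem \ref{thm2} is available. Finiteness of $G$ moreover requires $0<Q/p<Q$, i.e.\ $1<p<\infty$, which is the natural range of the theorem.
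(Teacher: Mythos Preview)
Your proposal is correct and follows essentially the same strategy as the paper. The paper does not invoke Theorem \ref{thm2} as a black box but instead repeats the Minkowski-inequality computation for the specific kernel $K_H(x,y)=1/\max(|x|_h^Q,|y|_h^Q)$; your route of verifying the kernel hypotheses and then citing Theorem \ref{thm2} is a legitimate shortcut to the same upper bound. For sharpness the paper simply plugs in $f(x)=|x|_h^{-Q/p}$ and asserts $Hf=G\,\omega_Q^{1/\bar p_1-1/\bar p_2}|x|_h^{-Q/p}$ without any truncation, so your $f_\varepsilon$-regularisation is actually more careful than what appears in the paper. The explicit evaluation of the integral via the split $|y|_h\lessgtr 1$ and polar coordinates is identical in both arguments.
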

\begin{proof}
Using Minkowski's inequality, we can get
\begin{align*}
\|H(f)\|_{L_{|x|_h}^pL_{\theta}^{\bar{p}_2}(\mathbb H^n)}&=\omega_Q^{1/\bar{p}_2}\left(\int_0^\infty\left|\int_{\mathbb H^n}\frac{1}{\max(1,|y|_h^Q)}f(\delta_r,y)dy\right|^pr^{Q-1}dr\right)^{1/p}\\
&\leq\omega_Q^{1/\bar{p}_2}\int_{\mathbb H^n}\left(\int_0^\infty|\frac{1}{\max(1,|y|_h^Q)}f(\delta_ry)|^pr^{Q-1}dr\right)^{1/p}dy\\
&=\omega_Q^{1/\bar{p}_2}\int_{\mathbb H^n}\left(\int_0^\infty\left|\frac{1}{\max(1,|y|_h^Q)}f(\delta_yy)\right|^pr^{Q-1}dr\right)^{1/p}dy\\
&=\omega_Q^{1/\bar{p}_2-1/\bar{p}_1}\int_{\mathbb H^n}\left(\int_0^\infty\omega_Q^{1/\bar{p}_1}\left|\frac{1}{\max(1,|y|_h^Q)}f(r)\right|^pr^{Q-1}dr\right)^{1/p}|y|_h^{-Q/p}dy\\
&=\omega_Q^{1/\bar{p}_2-1/\bar{p}_1}\int_{\mathbb H^n}\frac{|y|_h^{-Q/p}}{\max(1,|y|_h^Q)}dy\left(\int_0^\infty\omega_Q^{1/\bar{p}_1}\left|\frac{1}{\max(1,|y|_h^Q)}f(r)\right|^pr^{Q-1}dr\right)^{1/p}\\
&=\omega_Q^{1/\bar{p}_2-1/\bar{p}_1}\int_{\mathbb H^n}\frac{|y|_h^{-Q/p}}{\max(1,|y|_h^Q)}dy\|f\|_{L_{|x|_h}^pL_{\theta}^{\bar{p}_2}(\mathbb H^n)}.
\end{align*}
Set$f(x)=|x|_h^{-Q/p}$,with a simple exchange, we can get
$$
H(f(x))=G|x|_h^{-Q/p}
$$
and
\begin{align*}
\|H(f)\|_{L_{|x|_h}^pL_{\theta}^{\bar{p}_1}(\mathbb H^n)\rightarrow L_{|x|_h}^pL_{\theta}^{\bar{p}_2}(\mathbb H^n)}=G\|f\|_{L_{|x|_h}^pL_{\theta}^{\bar{p}_2}(\mathbb H^n)}
\end{align*}

Let's find $$\int_{\mathbb H^n}\frac{|y|_h^{-Q/p}}{\max(1,|y|_h^Q)}dy$$
Then we have,
\begin{align*}
\int_{\mathbb H^n}\frac{|y|_h^{-Q/p}}{\max(1,|y|_h^Q)}dy&=\int_{|y|_h<1}|y|_h^{-Q/p}dy+\int_{|y|_h>1}|y|_h^{-Q/p-Q}dy,\\
&=I_0+I_1,\\
I_0&=\int_{|y|_h<1}|y|_h^{-Q/p}dy\\
&=\frac{\omega_Q}{Q-Q/p},\\
I_1&=\int_{|y|_h>1}|y|_h^{-Q/p-Q}dy\\
&=\frac{\omega_Q}{Q/p},\\
I_0+I_1&=\frac{\omega_Q}{Q-Q/p}+\frac{\omega_Q}{Q/p}\\
&=\frac{\omega_Q(Q/p)+\omega_Q(Q-Q/p)}{(Q-Q/p)Q/p}\\
&=\frac{\omega_QQ}{(Q-Q/p)Q/p},
\end{align*}
then
\begin{align*}
\int_{\mathbb H^n}\frac{|y|_h^{-Q/p}}{\max(1,|y|_h^Q)}dy=\frac{\omega_QQ}{(Q-Q/p)Q/p},
\end{align*}
so
\begin{align*}
\omega_Q^{1/\bar{p}_2-1/\bar{p}_1}\int_{\mathbb H^n}\frac{|y|_h^{-Q/p}}{\max(1,|y|_h^Q)}dy=\omega_Q^{1/\bar{p}_2-1/\bar{p}_1}\frac{\omega_QQ}{(Q-Q/p)Q/p},
\end{align*}
Prove completion.
\end{proof}

\begin{flushleft}

	\vspace{0.3cm}\textsc{Xiang Li\\School of Science\\Shandong Jianzhu University\\Jinan, 250000\\P. R. China}

\emph{E-mail address}: \textsf{lixiang162@mails.ucas.ac.cn}

	\vspace{0.3cm}\textsc{Huan Liang\\School of Science\\Shandong Jianzhu University \\Jinan, 250000\\P. R. China}
	
	\emph{E-mail address}: \textsf{lianghuan202211@163.com}
	
	\vspace{0.3cm}\textsc{Shaozhuang Xu\\School of Science\\Shandong Jianzhu University \\Jinan, 250000\\P. R. China}

\emph{E-mail address}: \textsf{2417332391@qq.com}

	\vspace{0.3cm}\textsc{Dunyan Yan\\School of Mathematical Sciences\\University of Chinese Academy of Sciences\\Beijing, 100049\\P. R. China}
	
	\emph{E-mail address}: \textsf{ydunyan@163.com}

\end{flushleft}

\end{document}